\numberwithin{equation}{section}
\theoremstyle{plain}	     
\newtheorem{thm}{Theorem}[section] 
\newtheorem{lem}[thm]{Lemma}
\newtheorem{prop}[thm]{Proposition}
\theoremstyle{definition}
\theoremstyle{remark} 
\newtheorem{rem}[thm]{Remark}
\newcommand{\vp}{\varphi}
\begin{document}

\title{Remarks on Redheffer's inequality}

\author{Nagi Suzuki}
\address[Nagi Suzuki]{Department of Mathematical Sciences, Shibaura Institute of Technology, 307 Fukasaku, Minuma-ku, Saitama-shi, Saitama 337-8570, Japan}

\author{Shingo Takeuchi}
\address[Shingo Takeuchi]{Department of Mathematical Sciences, Shibaura Institute of Technology, 307 Fukasaku, Minuma-ku, Saitama-shi, Saitama 337-8570, Japan}
\email[Corresponding author]{shingo@shibaura-it.ac.jp}
\thanks{The work of S. Takeuchi was supported by JSPS KAKENHI Grant Number 22K03392.}

\subjclass{34L15, 26D05, 33E05}

\begin{abstract}
Redheffer's inequality and its generalization are applied to the study 
of behavior and estimates of the first eigenvalue of $p$-Laplacian with respect to $p$. Furthermore, a Redheffer-type inequality for 
the generalized trigonometric function is extended to a broader class.
\end{abstract}

\keywords{$p$-Laplacian, generalized trigonometric functions, 
Redheffer's inequality, eigenvalue} 

\maketitle


\section{Introduction}

Redheffer's inequality
\begin{equation}
\label{eq:redheffer}
\frac{\pi^2-x^2}{\pi^2+x^2} \leq \frac{\sin{x}}{x}
\end{equation}
was proposed by R. Redheffer in 1968 as an advanced problem
in Amer. Math. Monthly \cite{JCWSRRZ1968}*{No. 5642}.
In another issue of the journal \cite{RW1969}*{No. 5642}, 
the inequality was titled \textit{A delightful 
inequality} and J.P. Williams gave a proof relying on the infinite product 
representation of the sine function.
Since then, Redheffer's inequality has been widely studied in the area of inequality
(see \cites{W1972,Z2009,ZS2008,LZ2011,SB2015,BS2016,OT2021} and the references given there).

Research on Redheffer's inequality primarily consists of work aimed at improving its precision as an inequality and discovering similar inequalities involving trigonometric functions, hyperbolic functions, or special functions. See for instance \cites{B2007,ZS2008,Z2009,OT2021}. To the authors' knowledge, there have been few applications of this delightful inequality. 

The purpose of this paper is to apply Redheffer's inequality 
and its generalization by Zhu and Sun \cite{ZS2008} 
to the study of eigenvalue problems for differential equations, specifically to  
the analysis of behavior and estimates of the first eigenvalue of $p$-Laplacian with respect to $p$. 
As a result, we can provide an alternative proof for the monotonicity of the first eigenvalue shown by Kajikiya, Tanaka and Tanaka \cite{KTT2017}, 
while also refining the estimate for the first eigenvalue obtained by Kajikiya and Takeuchi \cite{KT2025}.
Furthermore, we extend the Redheffer-type inequality for generalized cosine function, previously discovered by Ozawa and Takeuchi \cite{OT2021}, to a broader class.


\section{Preliminaries}


In this section, we define generalized trigonometric functions and 
generalized $\pi$ necessary for stating the results in the next section.

Let $p,q \in (1,\infty)$ and 
$$F_{p,q}(x):=\int_0^x \frac{dt}{(1-t^q)^{1/p}}, \quad x \in [0,1].$$
We will denote by $\sin_{p,q}$ the inverse function of $F_{p,q}$, i.e.,
$$\sin_{p,q}{x}:=F_{p,q}^{-1}(x).$$
Clearly, $\sin_{p,q}{x}$ is an increasing function in $[0,\pi_{p,q}/2]$ to $[0,1]$,
where
$$\pi_{p,q}:=2F_{p,q}(1)=2\int_0^1 \frac{dt}{(1-t^q)^{1/p}}
=\frac{2}{q}B\left(1-\frac{1}{p},\frac{1}{q}\right),$$
where $B$ is the beta function.
We extend $\sin_{p,q}{x}$ to $(\pi_{p,q}/2,\pi_{p,q}]$ by $\sin_{p,q}{(\pi_{p,q}-x)}$
and to the whole real line $\mathbb{R}$ as the odd $\pi_{p,q}$-antiperiodic 
continuation of the function. 
Since $\sin_{p,q}{x} \in C^1(\mathbb{R})$,
we also define $\cos_{p,q}{x}$ by $\cos_{p,q}{x}:=(\sin_{p,q}{x})'$,
where ${}':=d/dx$.
Then, it follows that 
$$|\cos_{p,q}{x}|^p+|\sin_{p,q}{x}|^q=1.$$
In case $(p,q)=(2,2)$, it is obvious that $\sin_{p,q}{x},\ \cos_{p,q}{x}$ 
and $\pi_{p,q}$ are reduced to the ordinary $\sin{x},\ \cos{x}$ and $\pi$,
respectively. 
This is the reason why these functions and the constant are called
\textit{generalized trigonometric functions} 
and \textit{generalized $\pi$} (by parameter $(p,q)$), respectively. 

We denote $\sin_{p,p}{x},\ \cos_{p,q}{x}$ and $\pi_{p,p}$ by
$\sin_p{x},\ \cos_p{x}$ and $\pi_p$, respectively.
In particular, it is noted that
\[\pi_p
=\frac{2}{p}B\left(1-\frac{1}{p},\frac{1}{p}\right)
=\frac{2\pi}{p\sin{(\pi/p)}}.\]

We conclude this section by explaining the utility of generalized 
trigonometric functions and generalized $\pi$.
Let $\lambda(p)$ denote the first eigenvalue of $p$-Laplacian
with zero Dirichlet boundary conditions on the interval $[-1,1]$, 
and let $\vp$ denote the corresponding 
eigenfunction with $\max_{-1 \leq x \leq 1}\vp(x)=1$.
That is, $\lambda(p)$ and $\vp$ satisfy
\[-(|\vp'|^{p-2}\vp')'=\lambda(p) |\vp|^{p-2}\vp, \quad \vp(-1)=\vp(1)=0.\]
Then, it is known that $\lambda(p)$ and $\vp$ can be expressed 
as follows (see, for example, \cites{E1981,DM1999}):
\begin{gather}
\lambda(p)=(p-1)\left(\frac{\pi_p}{2}\right)^p
=(p-1)\left(\frac{\pi}{p\sin{(\pi/p)}}\right)^p,
\label{eq:ev} \\
\vp(x)=\sin_p{\left(\frac{\pi_p}{2}(x+1)\right)}.
\notag
\end{gather}
Using $\lambda(p)$ and $\vp$, 
all solutions $(\lambda,u)$ to the eigenvalue problem
\[-(|u'|^{p-2}u')'=\lambda |u|^{p-2}u, \quad u(a)=u(b)=0\]
can be fully written down as follows:
\[\lambda_k=\left(\frac{2k}{b-a}\right)^p \lambda(p),\quad 
u_k(x)=C\vp\left(\frac{2k(x-a)}{b-a}-1\right),\]
where $k=1,2,3,\ldots$ and $C$ is an arbitrary constant.
Thus, $\lambda(p)$ and $\vp$ provide us with a representation 
of the solutions to the eigenvalue problem for $p$-Laplacian.

%
%
%
%

\section{Main results}

This section presents the three main results of this paper.
The first two are applications of Redheffer's inequality to 
the analysis of the first eigenvalue $\lambda (p)$ of $p$-Laplacian, 
while the remaining one is an extension of a known Redheffer-type inequality.

\subsection{Alternative proof of the monotonicity of $\lambda(p)$}

Kajikiya, Tanaka, and Tanaka \cite{KTT2017} show that the 
behavior of the first eigenvalue of the $p$-Laplacian on the 
interval $[-L,L]$ with respect to $p$ changes significantly 
at the threshold $L=1$. Here, we present their result for $L=1$.
The first eigenvalue of the $p$-Laplacian in this case is 
the $\lambda(p)$ given by 
\eqref{eq:ev}. 

\begin{prop}[\cite{KTT2017}]
\label{thm:KTT}
$\lambda(p)$ is strictly increasing with respect to $p$
in $(1,\infty)$.
\end{prop}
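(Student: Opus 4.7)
The plan is to reduce strict monotonicity of $\lambda(p)$ to the positivity of a single one-variable function via the substitution $y=\pi/p\in(0,\pi)$, and then extract that positivity from Redheffer's inequality \eqref{eq:redheffer} together with a polynomial identity.

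First I would take the logarithmic derivative of $\lambda(p)=(p-1)(\pi/(p\sin(\pi/p)))^p$ and rewrite everything in terms of $y=\pi/p$. Using $\pi_p/2=y/\sin y$, the chain rule gives $p\,(\pi_p/2)'/(\pi_p/2)=-1+y\cot y$, and combining with $1/(p-1)-1=(2y-\pi)/(\pi-y)$ one obtains
\[
\frac{\lambda'(p)}{\lambda(p)} = h(y) := \frac{2y-\pi}{\pi-y}+\log\frac{y}{\sin y}+y\cot y,
\]
so Proposition~\ref{thm:KTT} becomes the claim that $h(y)>0$ on $(0,\pi)$.

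Next I would observe that each summand has a clean limit as $y\to 0^+$, giving $h(0^+)=-1+0+1=0$, so it suffices to show $h'(y)>0$ on $(0,\pi)$. Differentiating $h$, the $-\cot y$ from $(\log(y/\sin y))'$ exactly cancels the $+\cot y$ from $(y\cot y)'$, leaving
\[
h'(y)=\frac{\pi}{(\pi-y)^2}+\frac{1}{y}-\frac{y}{\sin^2 y},
\]
and after clearing denominators the inequality $h'(y)>0$ becomes
\[
\left(\frac{y}{\sin y}\right)^{\!2}<\frac{\pi^2-\pi y+y^2}{(\pi-y)^2}.
\]

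To finish, I would square Redheffer's inequality \eqref{eq:redheffer} in the form $y/\sin y\le(\pi^2+y^2)/(\pi^2-y^2)$ and, after factoring $\pi^2-y^2=(\pi-y)(\pi+y)$, reduce matters to the polynomial estimate $(\pi^2+y^2)^2\le(\pi+y)^2(\pi^2-\pi y+y^2)$. The main technical step I anticipate is spotting that the difference factors cleanly,
\[
(\pi+y)^2(\pi^2-\pi y+y^2)-(\pi^2+y^2)^2=\pi y(\pi-y)^2,
\]
which is strictly positive on $(0,\pi)$. Chaining this with Redheffer yields $h'(y)>0$, and integrating from $0$ then gives $h(y)>0$ throughout $(0,\pi)$, equivalently $\lambda'(p)>0$ for all $p\in(1,\infty)$.
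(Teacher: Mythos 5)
Your proof is correct, and its heart coincides with the paper's: the decisive step in both is Redheffer's inequality \eqref{eq:redheffer} combined with the same algebraic factorization. Indeed, your condition $h'(y)>0$, once you clear denominators, is precisely the statement $k(y)<0$ with $k(x)=x^2(\pi-x)^2-(\pi^2-\pi x+x^2)\sin^2 x$, which is exactly what the paper proves; and your identity $(\pi+y)^2(\pi^2-\pi y+y^2)-(\pi^2+y^2)^2=\pi y(\pi-y)^2$ is the same computation the paper records as $(\pi-x)^2-(\pi^2-\pi x+x^2)\bigl(\tfrac{\pi^2-x^2}{\pi^2+x^2}\bigr)^2=-\tfrac{\pi x(\pi-x)^4}{(\pi^2+x^2)^2}$. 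The genuine difference is organizational: the paper does not reprove the reduction from $\lambda'(p)>0$ to $k(x)<0$, but imports it from \cite{KTT2017} as Lemma \ref{lem:KTT}, whereas you re-derive it yourself via the logarithmic derivative, the substitution $y=\pi/p$, the observation $h(0^+)=0$, and monotonicity of $h$ obtained by integrating $h'>0$ (the cancellation of the $\cot y$ terms and the limit computation are correct, and strictness comes from the polynomial step, not from Redheffer, so the weak inequality \eqref{eq:redheffer} suffices). So your argument is self-contained where the paper leans on a citation, at the cost of carrying out the calculus reduction explicitly; the paper's presentation isolates the new contribution (the Redheffer shortcut replacing the quintic estimate of \cite{KTT2017}) more sharply.
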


Below, we provide an alternative proof of Proposition 
\ref{thm:KTT} using Redheffer's inequality \eqref{eq:redheffer}.

First, let us explain their proof idea in \cite{KTT2017}. 
They prove Proposition \ref{thm:KTT} using standard differential calculus. 
Ultimately, the proof reduces to the problem of determining the sign 
of a certain function. Although they do not explicitly state it as a 
lemma, we will present it here in the form of a lemma for convenience.

\begin{lem}
\label{lem:KTT}
Let 
\[k(x):=x^2(\pi-x)^2-(\pi^2-\pi x+x^2)\sin^2{x}.\]
Then, $\lambda'(p)>0$ for all $p \in (1,\infty)$ 
if $k(x)<0$ for all $x \in (0,\pi)$.
\end{lem}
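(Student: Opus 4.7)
The plan is to use logarithmic differentiation on the closed form
\[
\lambda(p) = (p-1)\left(\frac{\pi}{p\sin(\pi/p)}\right)^p
\]
given by \eqref{eq:ev}. Since $\lambda(p) > 0$, the sign of $\lambda'(p)$ agrees with the sign of $(\log\lambda(p))'$, and a direct computation yields
\[
\frac{\lambda'(p)}{\lambda(p)} = \frac{1}{p-1} + \log\frac{\pi}{p\sin(\pi/p)} - 1 + \frac{\pi\cos(\pi/p)}{p\sin(\pi/p)}.
\]

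Next I would perform the change of variable $x = \pi/p$, which maps $p\in(1,\infty)$ bijectively onto $x\in(0,\pi)$. Under this substitution, $1/(p-1) = x/(\pi-x)$, $\pi/(p\sin(\pi/p)) = x/\sin x$, and $\pi\cos(\pi/p)/(p\sin(\pi/p)) = x\cot x$. Thus it suffices to show that
\[
g(x) := \frac{x}{\pi-x} + \log\frac{x}{\sin x} - 1 + x\cot x
\]
is positive on $(0,\pi)$.

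The third step is to analyze $g$ by a boundary-value plus derivative-sign argument. The elementary limits $x/\sin x \to 1$ and $x\cot x \to 1$ as $x\to 0^+$ give $g(0^+) = 0$. A short computation (in which the $\cot x$ contributions cancel) gives
\[
g'(x) = \frac{\pi}{(\pi-x)^2} + \frac{1}{x} - \frac{x}{\sin^2 x}.
\]
Multiplying the inequality $g'(x) > 0$ through by the positive quantity $x(\pi-x)^2\sin^2 x$ and using $\pi x + (\pi-x)^2 = \pi^2 - \pi x + x^2$ yields exactly
\[
(\pi^2 - \pi x + x^2)\sin^2 x > x^2(\pi-x)^2,
\]
which is precisely the hypothesis $k(x) < 0$.

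Under that hypothesis, therefore, $g'(x) > 0$ on $(0,\pi)$, so $g$ strictly increases from $g(0^+) = 0$, whence $g(x) > 0$ and $\lambda'(p) > 0$ on $(1,\infty)$. Every step is routine calculus; the only potentially delicate point is checking that the divergent pieces of $g$ near $x = 0$ combine with the constant $-1$ to give the boundary value $0$, and this is immediate from the two standard limits above. The real mathematical content — reducing the monotonicity of $\lambda$ to a pointwise inequality on $\sin x$ — is entirely encoded in the translation between $g'(x) > 0$ and $k(x) < 0$, which is the only nontrivial bookkeeping.
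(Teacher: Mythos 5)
Your proof is correct: the logarithmic derivative, the substitution $x=\pi/p$, the cancellation of the $\cot x$ terms in $g'$, the identity $\pi x+(\pi-x)^2=\pi^2-\pi x+x^2$, and the boundary value $g(0^+)=0$ all check out, so $k(x)<0$ on $(0,\pi)$ indeed forces $g>0$ and hence $\lambda'(p)>0$. This is essentially the same route the paper attributes to Kajikiya--Tanaka--Tanaka (standard differential calculus on the closed form \eqref{eq:ev} reducing the monotonicity to the sign of $k$); the paper itself only cites that computation, and your argument supplies it in full.
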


By this lemma, to prove Proposition \ref{thm:KTT}, 
it suffices to show that $k(x) < 0$.
We briefly review their (skillful) proof in \cite{KTT2017} that $k(x) < 0$.
By the symmetry of $k(x)$, we may assume $0 < x < \pi/2$. Then, since $\sin x > x - x^3/6$,
\[k(x)<-\frac{x^3}{36}(x^5-\pi x^4-(12-\pi^2)x^3+12\pi x^2-12\pi^2 x+36\pi).\]
Let the quintic expression on the right-hand side be $K(x)$. 
Furthermore, since
\begin{align*}
K'(x)
&=5x^4-4\pi x^3-3(12-\pi^2)x^2+24\pi x-12\pi^2\\
&=-(4\pi-5x)x^3-3(12-\pi^2)x^2-12\pi(\pi-2x)<0
\end{align*}
holds for $x \in (0,\pi/2)$, we obtain
\[K(x)>K\left(\frac{\pi}{2}\right)=\frac{3}{32}\pi^5-\frac{9}{2}\pi^3+36\pi=2.25844\cdots>0.\]
Therefore, $k(x)<-x^3K(x)/36<0$.
Thus, Proposition \ref{thm:KTT} follows from Lemma \ref{lem:KTT}.

However, in fact, using Redheffer's inequality \eqref{eq:redheffer}, 
we can immediately see that $k(x) < 0$ as follows.
%
Dividing both sides of $k(x)$ by $x^2$, we have
\[\frac{k(x)}{x^2}
=(\pi-x)^2-(\pi^2-\pi x +x^2)\left(\frac{\sin{x}}{x}\right)^2.\]
Using \eqref{eq:redheffer} to the right-hand side yields
\begin{align*}
\frac{k(x)}{x^2} 
&\leq (\pi-x)^2-(\pi^2-\pi x +x^2)\left(\frac{\pi^2-x^2}{\pi^2+x^2}\right)^2\\
&=-\frac{\pi x (\pi-x)^4}{(\pi^2+x^2)^2}<0;
\end{align*}
hence, $k(x)<0$ for $x \in (0,\pi)$.
%

\subsection{Improvement to the estimate of $\lambda(p)$}

Regarding the first eigenvalue of the $p$-Laplacian on an $N$-dimensional ball, 
Huang \cite{H1997} and Benedikt and Dr\'{a}bek \cite{BD2012} obtained upper and lower bounds expressed 
as polynomials in $p$. Applying their bounds for $N=1$ yields the following result:

\begin{prop}[\cites{H1997,BD2012}]
\label{thm:BD2012}
For any $p \in (1,\infty)$, it holds that
\[p \leq \lambda(p) \leq p+1.\]
\end{prop}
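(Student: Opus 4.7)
My plan is to prove the two bounds independently, working from the Rayleigh quotient characterization
\[
\lambda(p)=\inf_{u\in W_0^{1,p}(-1,1)\setminus\{0\}}\frac{\int_{-1}^1 |u'|^p\,dx}{\int_{-1}^1 |u|^p\,dx}
\]
rather than the explicit trigonometric formula \eqref{eq:ev}. This sidesteps Redheffer-type inequalities entirely and reduces both bounds to textbook Sobolev estimates.

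For the upper bound $\lambda(p)\leq p+1$, I would insert the tent function $u(x)=1-|x|$, which lies in $W_0^{1,p}(-1,1)$ and satisfies $|u'|=1$ almost everywhere. A short computation gives $\int_{-1}^1 |u'|^p\,dx=2$ and $\int_{-1}^1 (1-|x|)^p\,dx=2/(p+1)$, so the Rayleigh quotient for this test function equals exactly $p+1$, and the upper bound follows.

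For the lower bound $\lambda(p)\geq p$, I would prove the sharp Poincar\'e-type inequality $\int_{-1}^1 |u|^p\,dx\leq (1/p)\int_{-1}^1 |u'|^p\,dx$ for every $u\in W_0^{1,p}(-1,1)$. Split the interval at $x=0$; for $x\in[-1,0]$, write $u(x)=\int_{-1}^x u'(t)\,dt$ using $u(-1)=0$, and apply H\"older's inequality with exponents $p$ and $p/(p-1)$ to obtain $|u(x)|^p\leq (x+1)^{p-1}\int_{-1}^x |u'(t)|^p\,dt$. Integrating in $x$ and exchanging the order of integration gives the bound
\[
\int_{-1}^0 |u|^p\,dx \leq \frac{1}{p}\int_{-1}^0 (1-(t+1)^p)\,|u'(t)|^p\,dt \leq \frac{1}{p}\int_{-1}^0 |u'|^p\,dt,
\]
since $(t+1)^p\geq 0$ on $[-1,0]$. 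The symmetric argument on $[0,1]$ based on $u(x)=-\int_x^1 u'(t)\,dt$ closes the proof.

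The argument is essentially obstacle-free: both inequalities reduce to a single variational characterization combined with either a well-chosen test function or H\"older's inequality. The one point worth remarking on is that the splitting point $x=0$ in the lower-bound step is chosen independently of $u$, so the Poincar\'e estimate applies uniformly to every admissible $u$ without passing through symmetric decreasing rearrangement or tracking where $u$ attains its maximum.
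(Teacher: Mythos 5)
Your argument is correct, but it is worth noting that the paper itself gives no proof of Proposition \ref{thm:BD2012}: it simply quotes the bounds from Huang and from Benedikt and Dr\'abek, who prove estimates for the first eigenvalue of the $p$-Laplacian on an $N$-dimensional ball and then specialize to $N=1$. Your route is a genuinely different, self-contained one-dimensional argument: the upper bound comes from inserting the tent function $u(x)=1-|x|$ into the Rayleigh quotient (the computation $\int_{-1}^1|u'|^p\,dx=2$, $\int_{-1}^1(1-|x|)^p\,dx=2/(p+1)$ is right and gives exactly $p+1$), and the lower bound comes from the Poincar\'e-type estimate $\int_{-1}^1|u|^p\,dx\le\frac1p\int_{-1}^1|u'|^p\,dx$, which your H\"older-plus-Fubini computation establishes correctly on each half interval. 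Two small remarks. First, your proof tacitly uses the standard fact that $\lambda(p)$ defined by the ODE eigenvalue problem coincides with the infimum of the Rayleigh quotient over $W_0^{1,p}(-1,1)$; this is classical for the one-dimensional $p$-Laplacian (and is implicit in the references the paper cites for \eqref{eq:ev}), but it should be stated as the hinge of the argument --- for the lower bound alone you could avoid it by multiplying the equation by the eigenfunction and integrating by parts, while the upper bound genuinely needs the variational characterization. Second, calling the constant $1/p$ ``sharp'' is a slight overstatement: the optimal constant is $1/\lambda(p)<1/p$; the inequality you prove is correct and entirely sufficient, just not extremal. An alternative, equally short check would have been to verify $p\le(p-1)\bigl(\tfrac{\pi}{p\sin(\pi/p)}\bigr)^p\le p+1$ directly from \eqref{eq:ev}, which is closer in spirit to how the paper's later results are handled, but your variational proof buys independence from any properties of $\pi_p$.
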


When $p$ is close to $1$, the lower bound can be improved (see \cites{BD2013,BEZ2009}). However, here we prefer the simplest estimate above.

In fact, the upper bound of the inequality in Proposition \ref{thm:BD2012}
is not optimal even when restricted to linear functions.
Indeed, it has recently been improved 
by Kajikiya and Takeuchi \cite{KT2025} as follows:

\begin{prop}[\cite{KT2025}]
\label{thm:KT2025}
For any $p \in (1,\infty)$, it holds that
\begin{equation*}
p <\lambda(p) <p+\frac{\pi^2}{6}-1.
\end{equation*}
Moreover, the upper bound is optimal in the following sense:
\begin{equation}
\label{eq:evopt}
\lim_{p \to \infty}(\lambda(p)-p)=\frac{\pi^2}{6}-1.
\end{equation}
\end{prop}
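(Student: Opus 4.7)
The plan is to substitute $x := \pi/p$ (so $x \in (0,\pi)$ when $p \in (1,\infty)$), rewriting
\[
\lambda(p) = (p-1)\!\left(\frac{x}{\sin x}\right)^{\!p},
\]
and then prove the claims via (i) asymptotic expansions for the two limits of $g(p):=\lambda(p)-p$ at $p = 1^+$ and $p = \infty$, and (ii) strict monotonicity of $g$ on $(1,\infty)$. Together these give $0 = g(1^+) < g(p) < g(\infty) = \pi^2/6 - 1$ for all $p \in (1,\infty)$, from which both strict inequalities and \eqref{eq:evopt} follow.

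For the limit at infinity, I use the classical series $\log(x/\sin x) = x^2/6 + x^4/180 + O(x^6)$. Substituting $x = \pi/p$ and multiplying by $p$ yields $p \log(x/\sin x) = \pi^2/(6p) + O(p^{-3})$, hence $(x/\sin x)^p = 1 + \pi^2/(6p) + O(p^{-2})$ and
\[
\lambda(p) - p = (p-1)\!\left(1 + \frac{\pi^2}{6p} + O(p^{-2})\right) - p = \frac{\pi^2}{6} - 1 + O(p^{-1}),
\]
which proves \eqref{eq:evopt}. For the limit at $p=1^+$, the expansion $\sin(\pi/p) \sim \pi(p-1)/p$ as $p \to 1^+$ gives $(x/\sin x)^p \sim (1/(p-1))^p$, so $\lambda(p) \sim (p-1)^{1-p} \to 1$ (since $(p-1)\log(p-1) \to 0$); hence $g(1^+) = 0$.

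Strict monotonicity of $g$, equivalent to $\lambda'(p) > 1$, is the central and most delicate step. Logarithmic differentiation gives
\[
\frac{\lambda'(p)}{\lambda(p)} = \frac{1}{p-1} + \log\frac{x}{\sin x} + x\cot x - 1,
\]
so $\lambda'(p) > 1$ reduces, after multiplying out, to a scalar trigonometric inequality in $x \in (0,\pi)$. Exactly as in the proof of Proposition~\ref{thm:KTT}, where the problem reduced to the negativity of $k(x)$ verified via Redheffer \eqref{eq:redheffer}, we attack this inequality by combining \eqref{eq:redheffer} with the Zhu--Sun refinement $\sin x/x \leq ((\pi^2-x^2)/(\pi^2+x^2))^{\pi^2/12}$; the exponent $\pi^2/12$ matches the second-order Taylor coefficient of $\sin x/x$ and is essential for reaching the asymptotically sharp constant $\pi^2/6 - 1$. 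Verifying this trigonometric inequality is the main obstacle, being finer than the $k(x) < 0$ in Proposition~\ref{thm:KTT} since we need the sharp estimate $\lambda'(p) > 1$ rather than merely $\lambda'(p) > 0$.
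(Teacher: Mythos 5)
This proposition is quoted from \cite{KT2025}; the paper under review gives no proof of it (it only uses the upper bound and then sharpens the lower bound in Theorem \ref{thm:eigenvalue}), so the only question is whether your argument stands on its own. It does not: the entire proof hangs on the claim that $g(p):=\lambda(p)-p$ is strictly increasing, i.e.\ $\lambda'(p)>1$ on $(1,\infty)$, and you do not prove this --- you reduce it to ``a scalar trigonometric inequality'' and state that verifying it ``is the main obstacle.'' That is precisely the content of the proposition, not a detail. Note that $\lambda'(p)>1$ is strictly stronger than the published monotonicity $\lambda'(p)>0$ of Proposition \ref{thm:KTT}, whose proof (via $k(x)<0$) has plenty of slack; your inequality has none. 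Indeed, from the expansion $g(p)=\frac{\pi^2}{6}-1+\bigl(\frac{\pi^4}{72}-\frac{\pi^2}{6}\bigr)\frac1p+O(p^{-2})$ one sees $\lambda'(p)-1=O(p^{-2})$ as $p\to\infty$, so the inequality you need is asymptotically razor-thin: in the identity $\lambda'(p)=\lambda(p)\bigl(\frac{1}{p-1}+\log\frac{x}{\sin x}+x\cot x-1\bigr)$ with $x=\pi/p$, the bracket and $1/\lambda(p)$ agree to first order in $x$, so any usable bound on $\sin x/x$ must be coherent through fourth order. Redheffer's bound $\frac{\pi^2-x^2}{\pi^2+x^2}=1-\frac{2x^2}{\pi^2}+\cdots$ does not even match $\sin x/x$ at second order, and it is exactly this crudeness that caps the Redheffer-derived upper estimate at roughly $\lambda(p)\lesssim p+1$; so the assertion that combining \eqref{eq:redheffer} with Lemma \ref{lem:ZS2008} will settle the monotonicity inequality ``exactly as in'' the proof of Proposition \ref{thm:KTT} is optimistic and unsubstantiated.

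The parts you do carry out are fine but are the easy parts: the limit $\lim_{p\to\infty}(\lambda(p)-p)=\frac{\pi^2}{6}-1$ via $\log\frac{x}{\sin x}=\frac{x^2}{6}+O(x^4)$ is correct (and already gives \eqref{eq:evopt} unconditionally), and the computation $\lambda(p)\sim(p-1)^{1-p}\to1$ as $p\to1^+$ is correct. But without a proof that $g$ is increasing, neither $p<\lambda(p)$ nor $\lambda(p)<p+\frac{\pi^2}{6}-1$ follows from these endpoint limits, so the proposal as written leaves the proposition unproved.
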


Now, we can further adjust the lower bound as follows to accommodate this upper bound.

\begin{thm}
\label{thm:eigenvalue}
For any $p \in (1,\infty)$, it holds that
\begin{equation}
\label{eq:egineq}
p+\frac{\pi^2}{6}-1-\frac{\pi^2}{6(p+1)}
<\lambda(p)
<p+\frac{\pi^2}{6}-1.
\end{equation}
Therefore, \eqref{eq:evopt} immediately follows.
\end{thm}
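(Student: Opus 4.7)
The upper bound is Proposition~\ref{thm:KT2025}, and the limit \eqref{eq:evopt} will follow by squeezing once the lower bound is in hand, so my plan is dedicated entirely to the lower bound. The natural tool, as the introduction suggests, is the Zhu--Sun refinement of Redheffer's inequality \cite{ZS2008},
\begin{equation*}
\frac{\sin x}{x} \leq \left(\frac{\pi^2 - x^2}{\pi^2 + x^2}\right)^{\pi^2/12}, \quad x \in (0, \pi),
\end{equation*}
which supplies the \emph{upper} bound on $\sin x/x$ complementing \eqref{eq:redheffer}. Setting $x = \pi/p$, inverting, raising to the $p$-th power, and invoking \eqref{eq:ev} then yields the clean intermediate estimate
\begin{equation*}
\lambda(p) \geq (p-1)\left(1 + \frac{2}{p^2-1}\right)^{p\pi^2/12}.
\end{equation*}

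To peel off the exponent I would apply Bernoulli's inequality $(1+y)^\alpha \geq 1 + \alpha y$ with $\alpha = p\pi^2/12$ and $y = 2/(p^2-1) > 0$. Bernoulli requires $\alpha \geq 1$, i.e., $p \geq 12/\pi^2$; when this holds, a short calculation gives
\begin{equation*}
\lambda(p) > (p-1) + \frac{p\pi^2}{6(p+1)} = p + \frac{\pi^2}{6} - 1 - \frac{\pi^2}{6(p+1)},
\end{equation*}
strictly because $y > 0$.

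The step I expect to be the main obstacle is the leftover range $1 < p < 12/\pi^2 \approx 1.22$, where Bernoulli reverses (exponent below $1$ with positive $y$) and the chain above breaks down. My rescue is to abandon the sharp inequality in this regime: a direct algebraic check shows that the proposed lower bound $p - 1 + \pi^2 p/(6(p+1))$ is strictly less than $p$ precisely when $p < 6/(\pi^2 - 6) \approx 1.55$, a range which strictly contains $(1, 12/\pi^2)$. Hence for such $p$ the much cruder estimate $\lambda(p) > p$ from Proposition~\ref{thm:KT2025} already exceeds the target. Finally, the two sides of \eqref{eq:egineq} differ by $\pi^2/(6(p+1)) \to 0$, so squeezing delivers \eqref{eq:evopt}.
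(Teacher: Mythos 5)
Your proposal matches the paper's proof essentially step for step: the same Zhu--Sun bound with exponent $\pi^2/12$, the same substitution $x=\pi/p$ leading to $\lambda(p)\geq(p-1)\bigl(1+\tfrac{2}{p^2-1}\bigr)^{p\pi^2/12}$, the same Bernoulli step, and the same fallback to $\lambda(p)>p$ on the small-$p$ range. The only hair-splitting point is $p=12/\pi^2$ itself, where Bernoulli with exponent exactly $1$ is an equality so your chain gives only $\geq$ there; the paper avoids this by assigning $1<p\leq 12/\pi^2$ to the crude branch, which your own observation that the bound is below $p$ for all $p<6/(\pi^2-6)\approx 1.55$ covers anyway.
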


Below, we will explain the proof of Theorem \ref{thm:eigenvalue}.
The proof relies on the following 
Redheffer-type inequality by Zhu and Sun \cite{ZS2008}:

\begin{lem}[\cite{ZS2008}]
\label{lem:ZS2008}
Let $0<x<\pi$. Then,
\[\left(\frac{\pi^2-x^2}{\pi^2+x^2}\right)^\beta
\leq \frac{\sin{x}}{x} \leq 
\left(\frac{\pi^2-x^2}{\pi^2+x^2}\right)^\alpha\]
holds if and only if $\alpha \leq \pi^2/12$ and $\beta \geq 1$.
\end{lem}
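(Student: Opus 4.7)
The plan is to split the \emph{if and only if} into a necessity direction handled by endpoint asymptotics and a sufficiency direction in which Redheffer's inequality \eqref{eq:redheffer} handles the lower bound and the upper bound reduces to the critical exponent $\alpha = \pi^2/12$.

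For necessity, note that $(\pi^2-x^2)/(\pi^2+x^2) \in (0,1)$ on $(0,\pi)$, so the upper bound tightens as $\alpha$ grows and the lower bound loosens as $\beta$ grows. A Taylor expansion at $x = 0$ gives $\log(\sin x/x) = -x^2/6 + O(x^4)$ and $\log((\pi^2-x^2)/(\pi^2+x^2)) = -2x^2/\pi^2 + O(x^4)$; matching the leading orders in the upper bound forces $\alpha \le \pi^2/12$. At the other endpoint, both $\sin x/x$ and $(\pi^2-x^2)/(\pi^2+x^2)$ behave like $(\pi-x)/\pi$ as $x \to \pi^-$, so the lower bound $((\pi-x)/\pi)^\beta \lesssim (\pi-x)/\pi$ with $(\pi-x)/\pi \to 0^+$ forces $\beta \ge 1$.

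For sufficiency, when $\beta \ge 1$ the monotonicity of $a^t$ (for $a \in (0,1)$) combined with \eqref{eq:redheffer} gives $((\pi^2-x^2)/(\pi^2+x^2))^{\beta} \le (\pi^2-x^2)/(\pi^2+x^2) \le \sin x/x$. For $\alpha \le \pi^2/12$, the same monotonicity reduces the upper bound to the critical case $\alpha = \pi^2/12$: we must show
$$\frac{\sin x}{x} \le \left(\frac{\pi^2-x^2}{\pi^2+x^2}\right)^{\pi^2/12}, \quad x \in (0, \pi).$$
Setting $u(x) := -\log(\sin x/x)$ and $v(x) := -\log((\pi^2-x^2)/(\pi^2+x^2))$, this becomes $u/v \ge \pi^2/12$. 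Since $u(0) = v(0) = 0$ and the Taylor analysis above gives $u(x)/v(x) \to \pi^2/12$ as $x \to 0^+$, it would be enough to show that $u/v$ is strictly increasing on $(0,\pi)$. By the monotone form of L'H\^opital's rule, this in turn follows if
$$\frac{u'(x)}{v'(x)} = \frac{(\sin x - x \cos x)(\pi^4 - x^4)}{4 \pi^2 x^2 \sin x}$$
is strictly increasing on $(0, \pi)$.

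The monotonicity of this ratio is the main obstacle. The natural approach is to differentiate $u'/v'$, clear the positive denominator, and reduce to the positivity of an explicit expression in $\sin x$, $\cos x$, and powers of $x$. If a direct sign analysis proves unwieldy over the full interval, one can split $(0, \pi)$ into $(0, \pi/2]$ and $[\pi/2, \pi)$, using the different sign structure of $\cos x$ on each piece; alternatively, the Mittag-Leffler expansion $\cot x = 1/x - 2\sum_{n \ge 1} x/(n^2 \pi^2 - x^2)$ lets one write $\sin x - x \cos x = \sin x \, (1 - x \cot x)$ as a manifestly positive series in $x^2$, opening the door to a term-by-term comparison with the factor $\pi^4 - x^4$.
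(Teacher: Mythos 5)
The paper offers no proof of Lemma \ref{lem:ZS2008} at all --- it is imported as a known result of Zhu and Sun \cite{ZS2008} --- so your attempt can only be judged on its own terms. The parts you carry out are correct: the necessity of $\alpha\le\pi^2/12$ and $\beta\ge1$ via the Taylor expansions at $x=0$ and the common first-order vanishing at $x=\pi$; the sufficiency of the lower bound from Redheffer's inequality together with $0<(\pi^2-x^2)/(\pi^2+x^2)<1$; the reduction of the upper bound to the critical exponent; and the reformulation as $u(x)/v(x)\ge\pi^2/12$ with $u=-\log(\sin x/x)$, $v=-\log\bigl((\pi^2-x^2)/(\pi^2+x^2)\bigr)$, including the formula for $u'/v'$.

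The difficulty is that the step you defer --- and on which the whole upper bound rests --- is not merely unproven but false. The ratio
\[
\frac{u'(x)}{v'(x)}=\frac{(\sin x - x\cos x)(\pi^4-x^4)}{4\pi^2x^2\sin x}
\]
is \emph{not} increasing on $(0,\pi)$: it tends to $\pi^2/12\approx0.8225$ as $x\to0^+$ and equals $15/16$ at $x=\pi/2$, but at $x=3\pi/4$ it is already $\approx1.0197$, it peaks near $x\approx2.9$ at $\approx1.026$, and then it \emph{decreases} to the limit $1$ as $x\to\pi^-$ (writing $x=\pi-\varepsilon$ one finds $u'/v'=1+\varepsilon/(2\pi)+O(\varepsilon^2)$, which approaches $1$ from above). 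So the monotone form of L'H\^opital's rule cannot be applied on all of $(0,\pi)$, and all three of your fallback strategies are directed at establishing this same false monotonicity, so none can succeed. The target inequality $u/v\ge\pi^2/12$ is of course still true (numerically $u/v$ increases from $\pi^2/12$ to $1$), but you need a different mechanism: for instance, the refinement of the monotone L'H\^opital rule for the case where $u'/v'$ is increasing-then-decreasing (which yields that $u/v$ is increasing or increasing-then-decreasing, hence bounded below by $\min\{\lim_{x\to0^+}u/v,\ \lim_{x\to\pi^-}u/v\}=\pi^2/12$), or an argument via the infinite product $\sin x/x=\prod_{n\ge1}(1-x^2/(n^2\pi^2))$ comparing $\sum_{n\ge1}\log(1-t^2/n^2)$ with $\tfrac{\pi^2}{12}\log\tfrac{1-t^2}{1+t^2}$ directly. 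As written, the proposal does not establish the upper bound.
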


\begin{proof}[Proof of Theorem \ref{thm:eigenvalue}]
Since the right-hand side of \eqref{eq:egineq} 
has been obtained by Proposition \ref{thm:KT2025}, 
only the left-hand side of \eqref{eq:egineq} is proved here.

We use the Redheffer-type inequality from Lemma \ref{lem:ZS2008} with $\alpha=\pi^2/12$:
for $x \in (0,\pi)$,
\[\frac{\sin{x}}{x} \leq \left(\frac{\pi^2-x^2}{\pi^2+x^2}\right)^{\frac{\pi^2}{12}}.\]
Putting $x=\pi/p\ (p>1)$, we obtain
\[\frac{\pi}{p\sin{(\pi/p)}}
\geq \left(\frac{p^2+1}{p^2-1}\right)^{\frac{\pi^2}{12}}.\]
Thus,
\[\lambda(p) \geq (p-1)
\left(1+\frac{2}{p^2-1}\right)^{\frac{\pi^2p}{12}}.\]
Using the inequality $(1+x)^\theta>1+\theta x$ for $x>0$ if $\theta>1$, 
we have
\[\lambda(p)> p-1+\frac{\pi^2p}{6(p+1)}
=p+\frac{\pi^2}{6}-1-\frac{\pi^2}{6(p+1)}\]
if $p>12/\pi^2$.
On the other hand, if $1< p \leq 12/\pi^2$, it is easily seen that $p > p + \pi^2/6 - 1-\pi^2/(6(p+1))$. Therefore, together with the fact that $\lambda(p) > p$ for any $p \in (1,\infty)$, \eqref{eq:egineq} is proved.
\end{proof}

\begin{rem}
Using the binomial expansion of $(1+x)^\theta$ 
up to higher-order terms yields increasingly accurate approximations, but here we present the simplest form using only the first-order term.
\end{rem}

\subsection{Extension of Redheffer's inequality to generalized trigonometric functions}

Ozawa and Takeuchi \cite{OT2021} 
generalized Redheffer's inequality so that it can be applied 
to a class of antiperiodic functions including the sine function.
To be precise, they establish a Redheffer-type inequality for a 
function $S$ in $(-\infty,\infty)$ that satisfies the following conditions:
there exist an $a \in (0,\infty)$ and a finite subset 
$P \subset (0,a)$, which may be the empty set, such that
\begin{enumerate}
\item[(S1)] $S(-x)=-S(x)$ and $S(a+x)=-S(x)$ for $x \in [0,\infty)$;
\item[(S2)] $0<S(x)<x$ for $x \in (0,a)$;
\item[(S3)] $S \in C([0,a]) \cap C^1([0,a)) \cap C^2([0,a) \setminus P)$;
\item[(S4)] $S'(x)^2-S''(x)S(x) \geq 1$ for $x \in [0,a) \setminus P$.
\end{enumerate}

It is clear that $S(x)$ is odd, continuous, piecewise smooth 
in $\mathbb{R}$ and 
antiperiodic with period $a$ in $[0,\infty)$; $S(na)=0$, 
$(-1)^nS(x)>0$ for $x \in (na,(n+1)a)$ and $n \in \mathbb{Z}$;
and $S(x)<x$ for $x \in (0,\infty)$ 
and $S \in C^1((-a,a))$ with $S'(0)=1$.
A typical example is $a=\pi,\ P=\emptyset$ and $S(x)=\sin{x}$.

\begin{prop}[\cite{OT2021}]
\label{thm:GRI}
Let $S$ be a function satisfying the conditions {\rm (S1)--(S4)}. Then,
\begin{equation}
\label{eq:gri}
\frac{a^2-x^2}{a^2+x^2} \leq \frac{S(x)}{x}.
\end{equation}
\end{prop}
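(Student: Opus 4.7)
The plan is to reduce \eqref{eq:gri} to the comparison $S(x) \geq L(x)$ on $(0,a)$, where $L(x) := x(a^2-x^2)/(a^2+x^2)$, and to establish the latter by a Sturm-type argument driven by (S4). The range $x \geq a$ is handled directly: for $x = a+s \in [a, 2a]$, combining $S(a+s) = -S(s)$ (from (S1)) with $S(s) < s$ (from (S2)) reduces \eqref{eq:gri} to the algebraic fact $a^2 + (a+s)^2 \leq (a+s)(2a+s)$, while for $x \geq 2a$ the bound $\max_{[0,a]} S < a$ together with $(a^2-x^2)/(a^2+x^2) \leq -3/5$ makes it immediate.

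On $(0, a)$ both $S$ and $L$ are positive and match at the endpoints in value and first derivative: $S(0) = L(0) = 0$, $S'(0) = L'(0) = 1$, $S(a) = L(a) = 0$, and $S'(a) = L'(a) = -1$. The relation $S'(a) = -1$ follows from the symmetry $S(a-x) = S(x)$, itself an immediate consequence of (S1). Thus $h(x) := \ln(S(x)/L(x))$ extends continuously to $[0,a]$ with $h(0) = h(a) = 0$.

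The heart of the proof is the observation that (S4) rewrites as
\[
(\ln S)''(x) = \frac{S''(x) S(x) - S'(x)^2}{S(x)^2} \leq -\frac{1}{S(x)^2} \quad \text{on } (0,a) \setminus P,
\]
while a direct calculation gives the strict reverse for $L$:
\[
(\ln L)''(x) + \frac{1}{L(x)^2} = \frac{8 a^2 x^2}{(a^2-x^2)(a^2+x^2)^2} > 0 \quad \text{on } (0, a).
\]
Subtracting yields $h''(x) < 1/L(x)^2 - 1/S(x)^2$ on $(0, a) \setminus P$.

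Suppose toward a contradiction that $S(x_0) < L(x_0)$ for some $x_0 \in (0,a)$. Then $h$ attains a negative minimum at some interior $x^*$. Near $x^*$ one has $S < L$, so $1/L^2 < 1/S^2$ and the displayed estimate forces $h'' < 0$ off $P$. This rules out $x^* \in P$: otherwise $h \in C^1$ would be strictly concave on both sides of $x^*$ with $h'(x^*) = 0$, producing a local maximum rather than a minimum. Hence $x^* \in (0, a) \setminus P$, where the second-derivative test requires $h''(x^*) \geq 0$, contradicting $h''(x^*) < 0$. The main technical obstacle is the strict differential inequality for $L$, which after algebraic simplification reduces to $(a^2+x^2)^2 > a^4 + 3x^4$, i.e., $a^2 > x^2$.
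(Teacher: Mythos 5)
Your overall strategy is workable and most of the details check out: the reduction of the range $x\ge a$ via (S1)--(S2), the identity $(\ln L)''+L^{-2}=8a^2x^2/\bigl((a^2-x^2)(a^2+x^2)^2\bigr)$ for $L(x)=x(a^2-x^2)/(a^2+x^2)$, and the interior-minimum argument (including the way you exclude a minimum at a point of $P$ using $h\in C^1$) are all correct. The genuine gap is at the right endpoint. The symmetry $S(a-x)=S(x)$ is \emph{not} a consequence of (S1): (S1) imposes $S(a+x)=-S(x)$ only for $x\ge 0$, and the reflection identity on $[0,a]$ is equivalent to antiperiodicity for negative shifts, which is not assumed. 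A concrete counterexample: take $k>1$, $a=\pi/2+\pi/(2k)$, $P=\{\pi/2\}$, $S(x)=\sin x$ on $[0,\pi/2]$ and $S(x)=\cos\bigl(k(x-\pi/2)\bigr)$ on $[\pi/2,a]$, extended by (S1); then (S1)--(S4) all hold, yet $S(a-x)\not\equiv S(x)$ and $S'(a^-)=-k\ne -1$. Moreover (S3) only guarantees $S\in C^1([0,a))$, so $S'$ need not even have a limit at $a$. Hence your claims $S'(a)=-1$ and $h(a)=0$ are unjustified.

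This is not a cosmetic point: with no lower control on $h$ as $x\to a^-$, the negative infimum of $h$ could be approached only at $a$ and no interior minimum need exist. Indeed, on a component $(\alpha,a)$ of $\{S<L\}$ your differential inequality merely makes $h$ concave with $h(\alpha^+)=0$, which is perfectly compatible with $h$ strictly decreasing toward $a$, so no contradiction follows. What you must prove, from (S4) rather than from symmetry, is $\liminf_{x\to a^-}S(x)/(a-x)\ge 1$; this gives $\liminf_{x\to a^-}h\ge 0$ and restores the existence of an interior negative minimum. It is provable: near $a$ one has $S>0$, $S\to 0$ and $(\ln S)''\le -S^{-2}$, so $(\ln S)'\to-\infty$ and $S'<0$ near $a$; regarding $Q=(S')^2$ as a function of $s=S$ there, (S4) becomes $s\,Q'(s)\le 2(Q-1)$, i.e.\ $(Q-1)/s^2$ is nonincreasing in $s$, whence $\liminf_{x\to a^-}S'(x)^2\ge 1$ and, after integrating, $S(x)\ge(1-\delta)(a-x)$ near $a$ for every $\delta>0$. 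With such a lemma supplied your proof closes; as written, it has a hole exactly where the hypothesis (S4) must be exploited a second time, at the vanishing point $x=a$.
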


It is worth pointing out that Redheffer's inequality \eqref{eq:redheffer} 
follows immediately from 
Proposition \ref{thm:GRI} with $a=\pi,\ P=\emptyset$ and $S(x)=\sin{x}$.

If we restrict the domain of function $S$ to $[0,a]$ and assume
\begin{itemize}
\item[(S1')] $S(0)=S(a)=0$
\end{itemize}
instead of (S1),
then \eqref{eq:gri} holds in this interval by a strict inequality
as follows.
The proof can be understood by reading the proof of Proposition \ref{thm:GRI}, i.e., \cite{OT2021}*{Theorem 1.1}, restricted to the interval $[0,a]$.

\begin{prop}
\label{cor:GRI}
Let $S$ be a function satisfying the conditions {\rm(S1')} and {\rm (S2)--(S4)}. Then,
\begin{equation*}
\frac{a^2-x^2}{a^2+x^2} < \frac{S(x)}{x}, \quad x \in (0,a).
\end{equation*}
\end{prop}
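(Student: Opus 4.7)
The plan is to translate the strict inequality into the positivity of an auxiliary function and then to rule out interior zeros via a differential argument. Specifically, I would introduce
\[\Phi(x):=(a^2+x^2)\,S(x)-x(a^2-x^2),\quad x\in[0,a],\]
for which the target inequality is equivalent to $\Phi(x)>0$ on $(0,a)$, since $x(a^2+x^2)>0$ there. By (S1') one has $\Phi(0)=\Phi(a)=0$, and combining (S2), (S3), and (S4) at the origin forces $S'(0)=1$, hence $\Phi'(0)=0$.

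Following the argument of Ozawa--Takeuchi for Proposition \ref{thm:GRI}, carried out verbatim on the interval $[0,a]$ as the excerpt suggests, one first obtains the weak inequality $\Phi(x)\geq 0$ on $[0,a]$. The remaining content is to upgrade this to strict positivity in the interior, which I would do by contradiction: assume $\Phi(x_0)=0$ for some $x_0\in(0,a)$. Then $x_0$ is a local minimum of $\Phi$.

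The core calculation is at $x_0$. I would first use $\Phi(x_0)=0$ and $\Phi'(x_0)=0$ to solve for $S(x_0)$ and $S'(x_0)$ explicitly in terms of $a$ and $x_0$. Since $S(x_0)>0$ by (S2), condition (S4) reads $S''(x_0)\leq(S'(x_0)^2-1)/S(x_0)$, which I would then plug into
\[\Phi''(x)=4x\,S'(x)+(a^2+x^2)\,S''(x)+2\,S(x)+6x\]
evaluated at $x=x_0$. After cancelling several polynomial terms, this should produce a strictly negative upper bound for $\Phi''(x_0)$, contradicting $\Phi''(x_0)\geq 0$ at an interior minimum.

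The hard part will be the algebraic simplification that yields the strictly negative bound on $\Phi''(x_0)$: one must manipulate several degree-four polynomials in $a$ and $x_0$ and identify a telescoping cancellation. A secondary technical point concerns the finite exceptional set $P$: at $x_0\in P$ the classical second derivative $\Phi''(x_0)$ is not available, and one must instead use one-sided derivatives (well-defined because $S\in C^2$ away from the finite set $P$) to reach the same contradiction.
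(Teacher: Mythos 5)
Your proposal is correct, but it takes a genuinely different route from the paper. The paper gives no self-contained argument: it simply asserts that strictness can be read off by re-running the proof of Proposition \ref{thm:GRI} from \cite{OT2021} on the interval $[0,a]$. You instead use that proof only for the non-strict bound and then upgrade it by a local second-order argument at a hypothetical interior touching point. Your ``hard part'' does close, and more cleanly than you feared: with $\Phi(x):=(a^2+x^2)S(x)-x(a^2-x^2)$, the conditions $\Phi(x_0)=\Phi'(x_0)=0$ give $S(x_0)=\frac{x_0(a^2-x_0^2)}{a^2+x_0^2}$ and $S'(x_0)=\frac{a^4-4a^2x_0^2-x_0^4}{(a^2+x_0^2)^2}$, and inserting $S''(x_0)\le\frac{S'(x_0)^2-1}{S(x_0)}$ from (S4) into $\Phi''(x_0)=2S(x_0)+4x_0S'(x_0)+(a^2+x_0^2)S''(x_0)+6x_0$ yields, after cancellation, $\Phi''(x_0)\le-\frac{8a^2x_0^3}{(a^2+x_0^2)^2}<0$, contradicting minimality. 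What your route buys is independence from the internal details of \cite{OT2021} (plus a quantitative drop of $\Phi''$ at any touching point); what the paper's deferral buys is brevity, at the cost of asking the reader to inspect the original proof to confirm strictness.

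Two refinements would make your argument airtight. First, for the weak inequality $\Phi\ge0$ you need not re-run the Ozawa--Takeuchi proof ``verbatim'': since (S2)--(S4) constrain $S$ only on $[0,a)$ and (S1') gives $S(0)=S(a)=0$, the odd $a$-antiperiodic extension of $S$ satisfies (S1)--(S4), so Proposition \ref{thm:GRI} applies as a black box and gives $\Phi\ge0$ on $(0,a)$. Second, at $x_0\in P$ one-sided second derivatives of $S$ need not exist (only $C^2$ off the finite set $P$ is assumed); the clean fix is to pick a one-sided interval $(x_0,x_0+\delta)$ disjoint from $P$, use Taylor's theorem there (legitimate since $\Phi\in C^1$ near $x_0$ and $\Phi''$ exists on the open interval) to produce points $\xi_n\downarrow x_0$ with $\Phi''(\xi_n)\ge0$, and observe that the (S4)-bound $\Phi''(\xi)\le 2S(\xi)+4\xi S'(\xi)+(a^2+\xi^2)\frac{S'(\xi)^2-1}{S(\xi)}+6\xi$ is continuous in $\xi$ near $x_0$ and strictly negative at $x_0$ by the computation above, giving the same contradiction. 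These are minor repairs, not gaps in the idea.
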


Applying Proposition \ref{thm:GRI} to $\sin_{p,q}{x}$, we can prove 
the following inequalities.

\begin{prop}[\cite{OT2021}]
\label{thm:GRI2}
Let $p,q \in [2,\infty)$. Then,
\begin{equation}
\label{eq:GRI}
\frac{\pi_{p,q}^2-x^2}{\pi_{p,q}^2+x^2} \leq \frac{\sin_{p,q}{x}}{x}.
\end{equation}
In particular, for $p \in [2,\infty)$,
\begin{equation*}
\frac{\pi_{p}^2-x^2}{\pi_{p}^2+x^2} \leq \frac{\sin_{p}{x}}{x}.
\end{equation*}
\end{prop}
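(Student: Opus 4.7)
The plan is to apply Proposition \ref{thm:GRI} to $S(x) = \sin_{p,q}{x}$ with $a = \pi_{p,q}$ and $P = \{\pi_{p,q}/2\}$; this yields \eqref{eq:GRI} directly, and the case $q = p$ recovers the second displayed inequality. All the work therefore reduces to verifying the four hypotheses (S1)--(S4) for $\sin_{p,q}$.

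Three of the four are immediate from the construction. Condition (S1) is the very definition of the extension of $\sin_{p,q}$ to $\R$. Condition (S3) follows because $\sin_{p,q} \in C^1(\R)$ and the identity $|\cos_{p,q}{x}|^p + |\sin_{p,q}{x}|^q = 1$ shows that $\sin_{p,q}$ is $C^2$ on $[0, \pi_{p,q})$ away from the zeros of $\cos_{p,q}$, the only such point being $\pi_{p,q}/2$. For (S2), on $(0, \pi_{p,q}/2]$ the pointwise bound $1/(1-t^q)^{1/p} > 1$ for $t \in (0, 1)$ gives $F_{p,q}(y) > y$ and hence $\sin_{p,q}{x} < x$; the estimate extends to $(\pi_{p,q}/2, \pi_{p,q})$ via the reflection $\sin_{p,q}{x} = \sin_{p,q}{(\pi_{p,q}-x)} < \pi_{p,q} - x < x$.

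The substantive step is (S4). Implicit differentiation of the Pythagorean-type identity on $(0, \pi_{p,q}/2)$ yields
\[ \cos_{p,q}'{x} = -\frac{q}{p}(\sin_{p,q}{x})^{q-1}(\cos_{p,q}{x})^{2-p}, \]
whence
\[ S'(x)^2 - S''(x)S(x) = (\cos_{p,q}{x})^2 + \frac{q}{p}(\sin_{p,q}{x})^q (\cos_{p,q}{x})^{2-p}. \]
Multiplying by the positive quantity $(\cos_{p,q}{x})^{p-2}$ and using $(\cos_{p,q}{x})^p = 1 - (\sin_{p,q}{x})^q$, one sees that (S4) is equivalent, with $t := (\sin_{p,q}{x})^q \in [0,1)$, to
\[ 1 + \left(\frac{q}{p} - 1\right) t \geq (1-t)^{(p-2)/p}. \]
Since $p \geq 2$ forces $\alpha := (p-2)/p \in [0,1]$, concavity of $u \mapsto (1-u)^\alpha$ gives the tangent-line bound $(1-t)^{(p-2)/p} \leq 1 - (p-2)t/p$, after which comparing slopes leaves exactly $q/p - 1 \geq -(p-2)/p$, i.e., $q \geq 2$. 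The range $(\pi_{p,q}/2, \pi_{p,q})$ is then covered by the reflection, which preserves $S'(x)^2 - S''(x)S(x)$, and the antiperiodic extension handles the rest of the line. The main obstacle is this slope comparison: it consumes both hypotheses $p \geq 2$ and $q \geq 2$ simultaneously, since loosening either breaks the tangent-line estimate at $t = 1$, which is consistent with the stated parameter range.
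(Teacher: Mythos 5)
Your proposal is correct and follows exactly the route the paper indicates (and that \cite{OT2021} takes): apply Proposition \ref{thm:GRI} to $S(x)=\sin_{p,q}x$ with $a=\pi_{p,q}$, the only real work being the verification of (S4), which your reduction to $1+(q/p-1)t\geq(1-t)^{(p-2)/p}$ via the tangent-line bound handles correctly under $p,q\geq 2$. The case $q=p$ then gives the second inequality, as stated.
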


From now on, we will consider the Redheffer-type inequality concerning the cosine function.
Since 
\[\frac{\sin{x}}{x}=\frac{2\sin{(x/2)}\cos{(x/2)}}{x}<\cos{\frac{x}{2}}\]
for $x \in (0,\pi)$,
it follows from \eqref{eq:redheffer} that 
\begin{equation}
\label{eq:cos}
\frac{\pi^2-x^2}{\pi^2+x^2} <\cos{\frac{x}{2}}, \quad x \in (0,\pi).
\end{equation}

Chen, Zhao and Qi \cite{CZQ2004} also directly prove \eqref{eq:cos} 
using the infinite product representation of the cosine function.

Similarly, a certain type of multiple-angle formula is also known for 
generalized sine functions
 (see Takeuchi \cite{T2016}). 
The formula gives
\[\frac{\sin_{2,q}{(2^{2/q-1}x)}}{(2^{2/q-1}x)}
=\frac{2\sin_{q^*,q}{(x/2)}\cos_{q^*,q}^{q^*-1}{(x/2)}}{x}
<\cos_{q^*,q}^{q^*-1}{\frac{x}{2}}\]
for $x \in (0,\pi_{q^*,q})=(0,\pi_{2,q}/(2^{2/q-1}))$,
where $q^*:=q/(q-1)$ for $q>1$.
Combining this inequality
and \eqref{eq:GRI}
yields the following generalization of \eqref{eq:cos}.

\begin{prop}[\cite{OT2021}]
\label{thm:cos}
Let $q \in [2,\infty)$. Then,
\begin{equation}
\label{eq:cosineq}
\frac{\pi_{q^*,q}^2-x^2}{\pi_{q^*,q}^2+x^2}<\cos_{q^*,q}^{q^*-1}{\frac{x}{2}}, 
\quad x \in (0,\pi_{q^*,q}).
\end{equation}
\end{prop}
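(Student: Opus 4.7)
\emph{Proof proposal.} The plan is to chain the Redheffer-type lower bound for $\sin_{2,q}(y)/y$ provided by Proposition \ref{thm:GRI2} with the multiple-angle identity relating $\sin_{2,q}$ to $\cos_{q^*,q}$. The scaling factor $2^{2/q-1}$ appearing in that identity is precisely the ratio $\pi_{2,q}/\pi_{q^*,q}$, so the Redheffer fraction transforms cleanly under the substitution $y=2^{2/q-1}x$.

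First, I would set $y=2^{2/q-1}x$ and observe that $y$ ranges over $(0,\pi_{2,q})$ as $x$ ranges over $(0,\pi_{q^*,q})$. Since $p=2$ and $q\in[2,\infty)$, Proposition \ref{thm:GRI2} applies and yields
\[\frac{\pi_{2,q}^2-y^2}{\pi_{2,q}^2+y^2}\leq \frac{\sin_{2,q}(y)}{y}.\]
Because $\pi_{2,q}=2^{2/q-1}\pi_{q^*,q}$, the common factor $(2^{2/q-1})^2$ cancels from numerator and denominator on the left-hand side, leaving
\[\frac{\pi_{q^*,q}^2-x^2}{\pi_{q^*,q}^2+x^2}\leq \frac{\sin_{2,q}(2^{2/q-1}x)}{2^{2/q-1}x}.\]

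Second, I would use the multiple-angle identity recalled in the excerpt in the rewritten form
\[\frac{\sin_{2,q}(2^{2/q-1}x)}{2^{2/q-1}x}=\frac{\sin_{q^*,q}(x/2)}{x/2}\cdot\cos_{q^*,q}^{q^*-1}\frac{x}{2},\]
together with the general property $0<\sin_{q^*,q}(x/2)<x/2$ for $x\in(0,\pi_{q^*,q})$ (as noted for functions satisfying (S1)--(S4)), to deduce the strict bound
\[\frac{\sin_{2,q}(2^{2/q-1}x)}{2^{2/q-1}x}<\cos_{q^*,q}^{q^*-1}\frac{x}{2}.\]
Concatenating this with the previous inequality delivers \eqref{eq:cosineq}, with the strict sign inherited from the second step.

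The only real obstacle is bookkeeping: one must keep the factor $2^{2/q-1}$ consistent across the argument substitution and the constants $\pi_{2,q}$ and $\pi_{q^*,q}$, and verify that the squared factors cancel exactly in the Redheffer fraction. Since every ingredient---Proposition \ref{thm:GRI2}, the multiple-angle identity, and the bound $\sin_{q^*,q}(x/2)<x/2$---is already on the table from the earlier parts of the paper, no further analytic work is required.
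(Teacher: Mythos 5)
Your proposal is correct and follows essentially the same route as the paper: the paper also obtains \eqref{eq:cosineq} by combining the Redheffer-type inequality \eqref{eq:GRI} for $\sin_{2,q}$ with the multiple-angle formula and the bound $\sin_{q^*,q}(x/2)<x/2$. You merely make the scaling bookkeeping $\pi_{2,q}=2^{2/q-1}\pi_{q^*,q}$ explicit, which the paper leaves implicit.
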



Now, with all that in mind, we can finally present our finding.
Using Proposition \ref{cor:GRI}, we shall extend
the range of parameters in \eqref{eq:cosineq} as follows.

\begin{thm}
\label{thm:cosredheffer}
Let $p \in [q^*,\infty)$ and $q \in [2,\infty)$. Then,
\[\frac{\pi_{p,q}^2-x^2}{\pi_{p,q}^2+x^2}<\cos_{p,q}^{p-1}{\frac{x}{2}}, \quad x \in (0,\pi_{p,q}).\]
In particular, for $p \in [2,\infty)$,
\[\frac{\pi_p^2-x^2}{\pi_p^2+x^2}<\cos_p^{p-1}{\frac{x}{2}}, \quad x \in (0,\pi_p).\]
\end{thm}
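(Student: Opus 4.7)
My plan is to apply Proposition~\ref{cor:GRI} to the function
\[S(x) := x\cos_{p,q}^{p-1}(x/2), \qquad x \in [0, \pi_{p,q}],\]
with $a := \pi_{p,q}$; since $S(x)/x = \cos_{p,q}^{p-1}(x/2)$, the conclusion of that proposition is exactly the target inequality, so everything reduces to verifying the four hypotheses (S1'), (S2), (S3), (S4). The first two are immediate: $S(0) = S(\pi_{p,q}) = 0$ because $\cos_{p,q}(\pi_{p,q}/2)=0$, and $0 < \cos_{p,q}(x/2) < 1$ on $(0,\pi_{p,q})$ gives $0 < S(x) < x$. For (S3), differentiating the identity $\cos_{p,q}^p + \sin_{p,q}^q \equiv 1$ produces
\[(\cos_{p,q}^{p-1})'(y) = -\frac{q(p-1)}{p}\sin_{p,q}^{q-1}(y),\]
which is continuous on $[0,\pi_{p,q}/2]$; a second differentiation introduces a factor $\sin_{p,q}^{q-2}\cos_{p,q}$ that remains finite when $q \ge 2$, so $S \in C^2([0,\pi_{p,q}])$ and one may take $P = \emptyset$.

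The heart of the proof is (S4). Writing $u := \sin_{p,q}(x/2)$ and $v := \cos_{p,q}(x/2)$ and using $u^q + v^p = 1$ to collapse the cross terms, a direct computation of $S'$ and $S''$ yields
\[S'(x)^2 - S''(x)S(x) = v^{2(p-1)} + \frac{(p-1)q\,x^2\,u^{q-2}}{4p}\left[(q-1) + \frac{p-q}{p}\,u^q\right].\]
I plan to show this is $\ge 1$ in two reductions. First, I bound the bracket below by $1$: for $p \ge q$ it is at least $q-1 \ge 1$, while for $q^* \le p < q$ it attains its minimum $q(p-1)/p$ at $u=1$, and $q(p-1)/p \ge 1$ is exactly the hypothesis $p \ge q^*$. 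Second, the elementary bound $2u \le x$ (namely $\sin_{p,q}(x/2) \le x/2$) upgrades $x^2 u^{q-2}$ to $4u^q$, reducing matters to $v^{2(p-1)} + \tfrac{q(p-1)}{p}u^q \ge 1$. Setting $\beta := v^p$ so that $u^q = 1-\beta$, this becomes $h(\beta) := \beta^{2-2/p} + \tfrac{q(p-1)}{p}(1-\beta) \ge 1$ on $[0,1]$. The endpoint values $h(1) = 1$ and $h(0) = q(p-1)/p \ge 1$ (again via $p \ge q^*$), together with $h'(1) = (p-1)(2-q)/p \le 0$ (from $q \ge 2$) and the sign of $h''$, show that $h$ is monotone decreasing for $p \ge 2$ and concave for $p < 2$, so $h \ge 1$ on $[0,1]$ in either case.

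The principal obstacle is, unsurprisingly, (S4): the differentiation itself is clean but the resulting expression must be driven down to $\ge 1$ using both parameter constraints in essential ways. Specifically, $q \ge 2$ is what makes the bracket $\ge q-1 \ge 1$ when $p \ge q$ and forces $h'(1) \le 0$, while $p \ge q^*$ (equivalently, $q(p-1)/p \ge 1$) controls both the minimum of the bracket when $p < q$ and the endpoint value $h(0)$. Once (S4) is secured, Proposition~\ref{cor:GRI} immediately delivers the asserted strict inequality on $(0,\pi_{p,q})$.
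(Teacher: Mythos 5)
Your proposal is correct and follows essentially the same route as the paper: the same auxiliary function $S(x)=x\cos_{p,q}^{p-1}(x/2)$ fed into Proposition~\ref{cor:GRI}, the same formulas for $S'$ and $S''$, and the same use of $\sin_{p,q}(x/2)\le x/2$ when verifying (S4). The only divergence is in the final elementary step: where the paper proves $g(t)\ge 0$ by a three-case analysis of $g'$, you bound the bracket $(q-1)+\tfrac{p-q}{p}u^q$ below by $1$ (using $q\ge 2$ and $p\ge q^*$) and then check $\beta^{2-2/p}+\tfrac{q(p-1)}{p}(1-\beta)\ge 1$ via convexity/concavity in $\beta=\cos_{p,q}^p(x/2)$, an equally valid and arguably tidier finish.
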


Proposition \ref{thm:cos} is a corollary
of Theorem \ref{thm:cosredheffer} with $p=q^*$.


\begin{proof}[Proof of Theorem \ref{thm:cosredheffer}]
Let $a:=\pi_{p,q},\ P:=\emptyset$ and $S(x):=x\cos_{p,q}^{p-1}{(x/2)}$.
Then, (S1') is trivial. 
Since $0<\cos_{p,q}{(x/2)}<1$ in $(0,\pi_{p,q})$, 
$S$ satisfies (S2). Next, since $p>1$ and $q \geq 2$,
\begin{gather*}
S'(x)
=\cos_{p,q}^{p-1}\frac{x}{2}-\frac{(p-1)q}{2p}
x\sin_{p,q}^{q-1}\frac{x}{2},\\
S''(x)
=-\frac{(p-1)q}{p}\sin_{p,q}^{q-1}\frac{x}{2}
-\frac{(p-1)(q-1)q}{4p}x\sin_{p,q}^{q-2}\frac{x}{2}\cos_{p,q}\frac{x}{2}
\end{gather*}
are continuous in $[0,\pi_{p,q})$; hence, (S3) holds.

Below, we shall prove (S4), i.e., $S'(x)^2-S''(x)S(x) \geq 1$ for $x \in [0,\pi_{p,q})$. Set $f(x):=S'(x)^2-S''(x)S(x)-1$.
Since $f(0)=0$, it suffices to show $f(x) \geq 0$ for $x \in (0,\pi_{p,q})$. 
From the above expressions of $S'$ and $S''$,
\begin{multline*}
f(x)
=\cos_{p,q}^{2p-2}\frac{x}{2}\\
+\frac{(p-1)q}{4p}x^2\sin_{p,q}^{q-2}\frac{x}{2}
\left(\frac{(p-1)q}{p}\sin_{p,q}^q\frac{x}{2}
+(q-1)\cos_{p,q}^p\frac{x}{2}\right)-1.
\end{multline*}
Using $x/2 \geq \sin_{p,q}{(x/2)}$
and $\sin_{p,q}^q{(x/2)}=1-\cos_{p,q}^p{(x/2)}$
in $(0,\pi_{p,q})$,
we have
\begin{multline*}
f(x)
\geq \frac{(p-1)q}{p^2}
\left(\frac{p^2}{(p-1)q}\cos_{p,q}^{2p-2}\frac{x}{2}
+(p-1)q \right.\\
\left. +(2q-p-pq)\cos_{p,q}^p\frac{x}{2}
+(p-q)\cos_{p,q}^{2p}\frac{x}{2}
-\frac{p^2}{(p-1)q}\right).
\end{multline*}
Setting for $t \in [0,1]$,
\[g(t):=(p-q)t^{2p}+\frac{p^2}{(p-1)q}t^{2p-2}
+(2q-p-pq)t^p+(p-1)q-\frac{p^2}{(p-1)q},\]
we obtain
\[f(x) \geq \frac{(p-1)q}{p^2}g\left(\cos_{p,q}\frac{x}{2}\right).\]

To prove $f(x) \geq 0$ for $x \in (0,\pi_{p,q})$, it is sufficient to show $g(t) \geq 0$ for $t \in (0,1)$.
Differentiating $g(t)$, we have
\[g'(t)
=2p(p-q)t^{2p-1}
+\frac{2p^2}{q}t^{2p-3}
+p(2q-p-pq)t^{p-1}.\]
Putting $h(t):=t^{1-p}g'(t)$, i.e., for $t \in [0,1]$,
\[h(t)
:=2p(p-q)t^p+\frac{2p^2}{q}t^{p-2}+p(2q-p-pq),\]
we obtain
\begin{align*}
h'(t)
&=2p^2(p-q)t^{p-1}+\frac{2p^2(p-2)}{q}t^{p-3}
=2p^2t^{p-3}\left((p-q)t^2+\frac{p-2}{q}\right).
\end{align*}

We shall consider each of the following three cases:
(i) $p>q \geq 2$, (ii) $q \geq p \geq 2$
and (iii) $2>p \geq q^*$.

\textit{Case} (i).
In this case, $h'(t)>0$ for $t \in (0,1)$.
Then, 
\[h(t)<h(1)=-\frac{p^2}{q}(q+1)(q-2) \leq 0.\]
Hence, $g'(t)<0$ for $t \in (0,1)$.
Therefore, $g(t)>g(1)=0$ for $t \in (0,1)$.

\textit{Case} (ii).
In this case, for $t \in (0,1)$,
\begin{align*}
h(t)
<\frac{2p^2}{q}+p(2q-p-pq)
=-\frac{p}{q}(p(q-2)+(p-2)q^2)<0.
\end{align*}
Hence, $g'(t)<0$ for $t \in (0,1)$.
Therefore, $g(t)>g(1)=0$ for $t \in (0,1)$.

\textit{Case} (iii).
In this case, since $q>2>p$, $h'(t)<0$ for 
$t \in (0,1)$. 
If $q \leq p/(2-p)$, then 
\[h(t)<h(0)=-p(p-(2-p)q) \leq 0.\]
Hence, $g'(t)<0$ for $t \in (0,1)$.
Therefore, $g(t)>g(1)=0$ for $t \in (0,1)$.
We assume $q >p/(2-p)$. Then,
$h(0) >0$ and $h(1)<0$.
Since $h$ is strictly decreasing in $(0,1)$,
there exists $t_0 \in (0,1)$ such that 
$h(t)>0$ in $(0,t_0)$ and $h(t)<0$ in $(t_0,1)$;
hence, so $g'$ does. Then, $g$ is strictly increasing
in $(0,t_0)$ and strictly decreasing in $(t_0,1)$.
Thus, $g(t)>\min\{g(0),g(1)\}$.
It should be noted that $g(1)=0$ and 
\[g(0)=(p-1)q-\frac{p^2}{(p-1)q}
=(p^*+q)\left(\frac{p}{q^*}-1\right) \geq 0.\] 
Therefore, $g(t)>0$ for $t \in (0,1)$.
\end{proof}





\bibliographystyle{amsplain}
\bibliography{myreferences}

\end{document}